\newtheorem{theorem}{Theorem}
\newtheorem{corollary}{Corollary}
\newtheorem{proposition}{Proposition}
\newtheorem{definition}{Definition}
\newtheorem{remark}{Remark}
\begin{document}
\title{Gorenstein projective precovers and finitely presented modules}

\author{Sergio Estrada}
\address{S.E. \ Departamento de Matem\'aticas\\ Universidad de Murcia\\ Murcia 30100, Spain}
\email[Sergio Estrada]{sestrada@um.es}
\author{Alina Iacob}
\address{A.I. \ Department of Mathematical Sciences \\
         Georgia Southern University \\
         Statesboro (GA) 30460-8093 \\ U.S.A.}
\email[Alina Iacob]{aiacob@GeorgiaSouthern.edu}

\thanks{The first author was partly supported by grant
  PID2020-113206GB-I00 funded by MCIN/AEI/10.13039/ 501100011033 and by grant 22004/PI/22 funded by Fundaci\'on S\'eneca.}
\maketitle

\begin{abstract}
The existence of the Gorenstein projective precovers over arbitrary rings is an open question. It is known that if  the ring has
finite Gorenstein global dimension, then every module has a Gorenstein projective precover. We prove here a "reduction" property - we show that, over any ring, it suffices
to consider finitely presented modules: if there exists a nonnegative integer $n$ such that every finitely presented module
has Gorenstein projective dimension $\le n$, then the class of Gorenstein
projective modules is special precovering.
\end{abstract}

\section{introduction}
The Gorenstein projective modules were introduced by Enochs and Jenda in 1995 (\cite{enochs:95:gorenstein}). They are the cycles of the exact complexes of projective modules
that remain exact when applying a functor $Hom(-, P)$, with $P$ any projective module. Such a complex is called a totally acyclic complex. We use $K_{tac}(Proj)$ to denote the homotopy category of totally acyclic complexes of projective modules. This is a full subcategory of that of complexes of projective modules, $K(Proj)$.\\
We will use $\mathcal{GP}$ to
denote the class of Gorenstein projective modules. Together with the Gorenstein injective and with the Gorenstein flat modules, they are the fundamental objects of Gorenstein
Homological Algebra. The Gorenstein methods have proved to be very useful, but they can only be used as long as the Gorenstein resolutions exist. The existence of the
Gorenstein resolutions over Gorenstein rings was proved by Enochs and Jenda (\cite{enochs:00:relative}). Then J{\o}rgensen proved (2007) that when $R$ is a commutative
noetherian
ring with a dualizing complex, the inclusion functor $e: K_{tac} (Proj) \rightarrow K(Proj)$
has a right adjoint. Using this adjoint he proved the existence of the Gorenstein projective resolutions over such rings. More recently, Murfet and
Salarian proved their existence over commutative noetherian rings of finite Krull dimension (in 2011). In \cite{iacob:14:gorproj}, we extended their result: we proved that
the class of Gorenstein projective modules is special precovering over any right coherent and left $n$-perfect ring. Consequently, over such rings, every module has a
Gorenstein projective resolution\\
But the existence of the Gorenstein projective resolutions over arbitrary rings is still an open question. It is known that if the ring has finite Gorenstein global dimension
(i.e. if there is a nonnegative integer $n$ such that any $R$-module has Gorenstein projective dimension $\le n$), then every module has a Gorenstein projective resolution.
We prove here a "reduction" property. We show (Theorem \ref{thm:Gpprecovering}) that, over any ring, it suffices to consider finitely presented modules: if there exists a nonnegative integer n
such that every
finitely presented module has Gorenstein projective dimension $\le n$, then the class of Gorenstein projective modules is special precovering. Previously, the result was only
known over two sided noetherian rings (\cite{enochs:00:relative}, Proposition 12.3.1).\\
As an application, we show (Corollary \ref{finitistic}) that under certain conditions the validity of the Second
Finitistic Dimension Conjecture implies that the class of Gorenstein
projective modules is special precovering.\\
We also consider the class of Ding projective modules. These modules were introduced in \cite{ding and mao 08}, as generalizations of the Gorenstein projective modules. They
are the cycles of the exact complexes of projective modules that remain exact when applying a functor $Hom(-, F)$, with $F$ any flat module. It is known (see for example
\cite{iacob:2020}) that the class of Ding projectives is special precovering over any coherent ring. But, just as in the case of the Gorenstein projectives, the existence of
the Ding projective precovers over arbitrary rings is still an open question. We show that this problem can also be reduced to finitely presented modules: if there exists a
nonnegative integer $n$ such that every
finitely presented module has Ding projective dimension $\le n$, then the class of Ding projective modules is special precovering.

\section{preliminaries}
Throughout this paper $R$ will be an associative ring with identity and all modules are left $R$-modules.

\begin{definition}
A module $M$ is strongly FP-injective if, for any finitely presented module $T$, $Ext^i(T,M)=0$ for all $i \ge 1$.
\end{definition}

 We use $\mathcal{SFPI}$ to denote the class of strongly FP-injective modules, and $\mathcal{C}$ to denote its left orthogonal class, $\mathcal{C} = ^\bot \mathcal{SFPI}$.
 The modules in $\mathcal{C}$ are named \emph{weakly FP-projective modules} in \cite{BHP:2022}.

We note that if the ring $R$ is left coherent, then the class of strongly FP-injective modules is simply that of FP-injective modules, $\mathcal{FPI}$ (see, for instance
\cite{LGO:2017}, Theorem 4.2).\\

The following results are from Emmanouil and Kaperonis, \cite{emmanouil:kaperonis}, and Li, Guan and Ouyang, \cite{LGO:2017}:\\

\begin{proposition} (\cite{LGO:2017}, Theorem 3.4) Over any ring $R$, $(\mathcal{C}, \mathcal{SFPI})$ is a complete and hereditary cotorsion pair in $R-Mod$.
\end{proposition}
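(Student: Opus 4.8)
The plan is to realize $\mathcal{SFPI}$ as the right $Ext^1$-orthogonal class $\mathcal{S}^{\bot}$ of an honest \emph{set} $\mathcal{S}$ of modules, and then apply the Eklof--Trlifaj theorem, by which the cotorsion pair generated by a set is always complete. Since by the definition of $\mathcal{C}$ one has $\mathcal{C}={}^{\bot}\mathcal{SFPI}={}^{\bot}(\mathcal{S}^{\bot})$, the cotorsion pair generated by $\mathcal{S}$ is exactly $(\mathcal{C},\mathcal{SFPI})$, so completeness (and the fact that this is a cotorsion pair at all) comes for free. The only delicate point is the choice of $\mathcal{S}$: over a non-coherent ring a syzygy of a finitely presented module need not be finitely presented, nor even finitely generated, so one cannot simply take a set of representatives of the finitely presented modules.

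First I would build the set. Let $\mathcal{S}_0$ be a set of representatives of the isomorphism classes of finitely presented left $R$-modules; this is a set, since each such module is the cokernel of a homomorphism $R^m\to R^n$. For each $T\in\mathcal{S}_0$ fix, once and for all, a projective resolution $\cdots\to P_1\to P_0\to T\to 0$ with $P_0,P_1$ finitely generated free and the higher $P_i$ free, and let $\Omega^iT$ denote its $i$-th syzygy. Put
\[
\mathcal{S}\;=\;\mathcal{S}_0\;\cup\;\{\,\Omega^iT\;:\;T\in\mathcal{S}_0,\ i\ge 1\,\}.
\]
This is a set (a union, indexed by a set, of singletons); it contains every finitely presented module up to isomorphism; and it is ``closed under syzygies'' in the sense that every $S\in\mathcal{S}$ sits in a short exact sequence $0\to \Omega S\to P\to S\to 0$ with $P$ projective and $\Omega S\in\mathcal{S}$, namely the appropriate piece of the fixed resolution of the module from which $S$ arises (for $S=T$ take $\Omega S=\Omega^1T$).

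Next I would verify $\mathcal{S}^{\bot}=\mathcal{SFPI}$. For the inclusion $\supseteq$: if $M\in\mathcal{SFPI}$ and $S\in\mathcal{S}$, then either $S$ is finitely presented and $Ext^1(S,M)=0$ by definition, or $S=\Omega^kT$ with $T$ finitely presented, in which case dimension shifting along the fixed resolution of $T$ gives $Ext^1(\Omega^kT,M)\cong Ext^{k+1}(T,M)=0$. For the inclusion $\subseteq$: if $M\in\mathcal{S}^{\bot}$ and $T$ is finitely presented, we may take $T\in\mathcal{S}_0$; then $T$ and each of its syzygies $\Omega^{i-1}T$ lies in $\mathcal{S}$, so $Ext^i(T,M)\cong Ext^1(\Omega^{i-1}T,M)=0$ for every $i\ge 1$ (dimension shifting again, with $\Omega^0T=T$), i.e. $M\in\mathcal{SFPI}$. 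Granting this, $(\mathcal{C},\mathcal{SFPI})=({}^{\bot}(\mathcal{S}^{\bot}),\mathcal{S}^{\bot})$ is the cotorsion pair generated by the set $\mathcal{S}$, hence complete by Eklof--Trlifaj. Finally, for heredity it suffices to see that $\mathcal{SFPI}$ is closed under cokernels of monomorphisms: given an exact sequence $0\to A\to B\to C\to 0$ with $A,B\in\mathcal{SFPI}$ and $T$ finitely presented, in the long exact sequence $Ext^i(T,B)\to Ext^i(T,C)\to Ext^{i+1}(T,A)$ both outer terms vanish for every $i\ge 1$, so $Ext^i(T,C)=0$ and $C\in\mathcal{SFPI}$; this is one of the standard equivalent formulations of a cotorsion pair being hereditary.

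The main---essentially the only---obstacle is the first step: packaging the generating data of $\mathcal{SFPI}$ into a set rather than a proper class, which is exactly where a possible failure of coherence makes itself felt. The alternative route of showing $\mathcal{SFPI}$ is definable and invoking a general completeness criterion is not available here, since for finitely presented $T$ the functor $Ext^i(T,-)$ need not commute with direct limits when $i\ge 2$, so $\mathcal{SFPI}$ need not be closed under direct limits. Once $\mathcal{S}$ is in place, the remaining steps are routine dimension shifting and long-exact-sequence bookkeeping.
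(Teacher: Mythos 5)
Your proposal is correct and follows essentially the same route as the paper: the paper also observes that $(\mathcal{C},\mathcal{SFPI})$ is generated by a representative set of the finitely presented modules together with their syzygies, and invokes the Eklof--Trlifaj theorem for completeness and the syzygy-closure (equivalently, coresolving right class) for heredity. Your write-up simply spells out the dimension-shifting details that the paper leaves implicit.
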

\begin{proof}
The cotorsion pair $(\mathcal{C}, \mathcal{SFPI})$ is complete and hereditary since it is generated by a representative set of the finitely presented modules and their syzygies (this follows from \cite{ET}, Theorem 10).
\end{proof}

\begin{remark}
If $R$ is a left coherent ring, then $\mathcal{C}$ is the class of FP-projective modules.
\end{remark}

This follows from the fact that $(FP-Proj, \mathcal{FPI})$ is a cotorsion pair for any ring, and from the fact that $\mathcal{SFPI} = \mathcal{FPI}$ over a left coherent ring.
Thus $FP-Proj = ^\bot \mathcal{FPI} = ^\bot \mathcal{SFPI} = \mathcal{C}$ when $R$ is left coherent.

\begin{proposition} (\cite{emmanouil:kaperonis}, Corollary 4.9)
Every acyclic complex of projective modules is in $\widetilde{\mathcal{C}}$ (where $\widetilde{\mathcal{C}}$ is the class of acyclic complexes with all cycles from the class
$\mathcal{C}$).
\end{proposition}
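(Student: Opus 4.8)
The plan is to reduce to the case of complexes of finitely generated projectives and then bootstrap to the general case by a filtration argument. Since $P_\bullet$ is already acyclic and $\widetilde{\mathcal C}$ consists precisely of the acyclic complexes all of whose cycles lie in $\mathcal C$, what has to be shown is that each cycle $Z_n:=Z_n(P_\bullet)$ lies in $\mathcal C$. I would begin from the observation that, the terms being projective and the complex acyclic, $Z_n\cong\operatorname{coker}(d_{n+2}\colon P_{n+2}\to P_{n+1})$, with short exact sequences $0\to Z_{n+1}\to P_{n+1}\to Z_n\to 0$. (A routine computation with these sequences identifies the cohomology of $\Hom_R(P_\bullet,M)$ with subquotients of the groups $\ext{R}{1}(Z_n,M)$, so that $P_\bullet\in\widetilde{\mathcal C}$ is equivalent to $\Hom_R(P_\bullet,M)$ being exact for every strongly FP-injective $M$; I prefer, though, to argue the membership $Z_n\in\mathcal C$ directly.) In the finitely generated case — all $P_i$ finitely generated projective — the formula $Z_n\cong\operatorname{coker}(d_{n+2})$ exhibits $Z_n$ as the cokernel of a map of finitely generated projectives, hence $Z_n$ is finitely presented; and every finitely presented module is in $\mathcal C$, since it belongs to the chosen set of generators of the cotorsion pair $(\mathcal C,\mathcal{SFPI})$ and any generating set of a cotorsion pair is contained in its left-hand class ${}^{\perp}\mathcal{SFPI}=\mathcal C$. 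This settles that case.

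For a general acyclic complex of projectives $P_\bullet$, the plan is to write it as a filtered object built from acyclic complexes of finitely generated projectives: a continuous increasing chain $(P^{(\alpha)}_\bullet)_{\alpha\le\kappa}$ of acyclic subcomplexes with $P^{(0)}_\bullet=0$, $P^{(\kappa)}_\bullet=P_\bullet$, and each quotient $P^{(\alpha+1)}_\bullet/P^{(\alpha)}_\bullet$ acyclic with finitely generated projective terms. Granting such a filtration, the argument closes up quickly: in each short exact sequence $0\to P^{(\alpha)}_\bullet\to P^{(\alpha+1)}_\bullet\to P^{(\alpha+1)}_\bullet/P^{(\alpha)}_\bullet\to 0$ all three complexes are acyclic, so applying $Z_n(-)$ gives short exact sequences $0\to Z_n(P^{(\alpha)}_\bullet)\to Z_n(P^{(\alpha+1)}_\bullet)\to Z_n(P^{(\alpha+1)}_\bullet/P^{(\alpha)}_\bullet)\to 0$, and $\bigl(Z_n(P^{(\alpha)}_\bullet)\bigr)_\alpha$ is a continuous chain (cycles commute with directed unions). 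By the finitely generated case the successive quotients $Z_n(P^{(\alpha+1)}_\bullet/P^{(\alpha)}_\bullet)$ are finitely presented, so $Z_n(P_\bullet)$ is filtered by finitely presented modules; since finitely presented modules lie in $\mathcal C$ and $\mathcal C$, as the left class of a cotorsion pair, is closed under transfinite extensions by Eklof's Lemma (cf.\ \cite{ET}), we obtain $Z_n(P_\bullet)\in\mathcal C$ for all $n$, i.e.\ $P_\bullet\in\widetilde{\mathcal C}$. (Concretely this is where strong FP-injectivity enters: for $M\in\mathcal{SFPI}$ the finitely presented successive quotients make the transition maps of the inverse system $\{\Hom_R(Z_n(P^{(\alpha)}_\bullet),M)\}$ surjective, whence $\ext{R}{1}(Z_n(P_\bullet),M)=\varprojlim^{1}\Hom_R(Z_n(P^{(\alpha)}_\bullet),M)=0$.)

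The main obstacle is precisely the filtration statement invoked at the start of the general case: that every acyclic complex of projectives is filtered by acyclic complexes of finitely generated projectives (i.e.\ that this class of complexes is deconstructible, with small objects of this explicit shape). This is where projectivity of the terms is essential — the corresponding assertion fails for acyclic complexes with terms only in $\mathcal C$, or only flat — so the cycle condition genuinely has to be produced, not inherited from the terms. I would establish it by a Hill-lemma/Kaplansky-type construction in $\operatorname{Ch}(R)$: enlarge the current stage by adjoining finitely generated projective direct summands of the $P_i$, closing up along the differential in both directions (the complex is unbounded) and with respect to acyclicity, so that each new quotient piece is again acyclic with finitely generated projective terms. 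Over non-coherent rings the delicate step is to keep the adjoined pieces finitely generated without destroying acyclicity; if one can only arrange the pieces to be $\kappa$-generated for some infinite cardinal $\kappa$, one would then have to supplement this with an induction on $\kappa$. This packaging issue — not anything about the cotorsion pair itself — is where I expect the real work to lie.
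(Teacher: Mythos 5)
There is a genuine gap, and it is exactly at the point you defer to the end. The easy parts of your argument are fine: cycles of an acyclic complex of finitely generated projectives are finitely presented; finitely presented modules lie in $\mathcal{C}={}^{\perp}\mathcal{SFPI}$; taking $Z_n$ is exact on short exact sequences of acyclic complexes and commutes with directed unions; and $\mathcal{C}$, being the left class of a cotorsion pair, is closed under filtrations by Eklof's lemma (\cite{ET}). But all of the content of the proposition has been displaced into the unproved claim that every acyclic complex of projectives is filtered by acyclic complexes of \emph{finitely generated} projectives, and this is not a mere ``packaging issue'': it is strictly stronger than the proposition itself. If it held, every cycle of an acyclic complex of projectives would be filtered by finitely presented modules and hence lie in ${}^{\perp}\mathcal{FPI}$ (it would be FP-projective), whereas the cited result of Emmanouil--Kaperonis, and the closely related work \cite{BHP:2022}, assert only membership in the larger class $\mathcal{C}={}^{\perp}\mathcal{SFPI}$ (``weakly FP-projective''). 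Over non-coherent rings these classes differ, and the whole reason those papers work with strongly FP-injective rather than FP-injective modules is that the stronger conclusion is not available over arbitrary rings. So your route would need to establish something beyond what the literature you are implicitly relying on provides.

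Moreover, the construction you sketch cannot deliver finitely generated pieces. First, a projective module need not be a direct sum of finitely generated submodules; Kaplansky's theorem only gives countably generated summands, so even the degreewise starting point fails. Second, and more fundamentally, the closure that restores acyclicity of a subcomplex --- adjoining, for each cycle of the current stage, a preimage under the differential, whose presence creates new cycles and new boundaries that must in turn be handled --- is an inherently countable back-and-forth; it stabilizes only after $\omega$ steps, so the filtration quotients it produces are countably generated, not finitely generated. This is precisely why the known deconstruction results for acyclic complexes of projectives have countably generated (not finitely presented) pieces. Your fallback of inducting on the cardinal of generation has no step: cycles of acyclic complexes of countably generated projectives are countably presented, and a countably presented module is a direct limit, not a filtration, of finitely presented ones; since $\mathcal{C}$ is not closed under direct limits, nothing reduces, and showing that those cycles lie in $\mathcal{C}$ is the original problem all over again at the countable level. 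Note finally that the paper does not prove this proposition at all: it quotes \cite{emmanouil:kaperonis}, Corollary 4.9, whose argument works directly with $Hom(P,M)$ for $M$ strongly FP-injective, exploiting closure properties of $\mathcal{SFPI}$ (in the setting of K-absolute purity), rather than any deconstruction of acyclic complexes of projectives into small subcomplexes.
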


Our results focus on Gorenstein projective modules, and on Ding projective modules, so we recall the definitions:\\
\begin{definition}
A module $G$ is Gorenstein projective if there exists an exact complex of projective modules $P = \ldots \rightarrow P_1 \rightarrow P_0 \rightarrow P_{-1} \rightarrow \ldots
$ such that $Hom(P, T)$ is exact for any projective module $T$, and such that $G = Z_0(P)$.
\end{definition}

 We recall that a Gorenstein projective precover of a module $M$ is a homomorphism
$g: G \rightarrow M$ with $G$ a Gorenstein projective module, and
with the property that any homomorphism $h: G' \rightarrow M$, from a Gorenstein projective
module $G'$ to $M$ factors through $g$ ($h =
gu$ for some $u \in Hom(G', G)$).

\[
\begin{diagram}
\node{}\node{G'}\arrow{sw,t,..}{u}\arrow{s,r}{h}\\
\node{G}\arrow{e,t}{g}\node{M}
\end{diagram}
\]

Such a Gorenstein projective precover $g: G \rightarrow M$ is said to be \emph{special} if $Ext^1(G', Ker(g)) =0$ for any Gorenstein projective module $G'$.\\

The Ding projective modules were introduced in \cite{ding and mao 08}, where they were called strongly Gorenstein flat modules. Later they were renamed Ding projective
modules (in \cite{gillespie-ding-modules}). The class of Ding projective modules is known to be precovering over coherent rings. But, just as in the case of the Gorenstein
projectives, the existence of the Ding projective precovers over arbitrary rings is an open question.\\
\begin{definition}
A module $M$ is Ding projective if there exists an exact complex of projective modules $P = \ldots \rightarrow P_1 \rightarrow P_0 \rightarrow P_{-1} \rightarrow \ldots $
such that $Hom(P,F)$ is exact for any flat module $F$, and such that $M = Z_0(P)$.
\end{definition}

We will use $\mathcal{GP}$ to denote the class of Gorenstein projective modules, and $\mathcal{DP}$ for the class of Ding projective modules. It is immediate from the
definitions that $\mathcal{DP} \subseteq \mathcal{GP}$.\\

 Since the Ding projective modules, as well as the Gorenstein projectives are cycles of exact complexes of projective modules, and, by Proposition 2, any such complex is in
 $\widetilde{\mathcal{C}}$, we obtain:\\
 \begin{remark}
 $\mathcal{DP} \subseteq \mathcal{GP} \subseteq \mathcal{C}$, where $\mathcal{DP}$ is the class of Ding projective modules, and $\mathcal{GP}$ is that of Gorenstein
 projective modules.
 \end{remark}




The Ding projective (special) precovers are defined in a similar manner with the Gorenstein projective ones, by replacing the class of Gorenstein projectives with that of Ding projective modules in the definition.

The existence of Gorenstein projective precovers (Ding projective precovers respectively) allows defining Gorenstein projective resolutions (Ding projective resolutions
respectively). A Gorenstein projective resolution of a module $M$ is a complex $$ \ldots \rightarrow G_1 \rightarrow G_0
\rightarrow M \rightarrow 0$$ such that
$G_0 \rightarrow M$ and each $G_i \rightarrow Ker (G_{i-1}
\rightarrow G_{i-2})$ for $i \ge 1$ are Gorenstein projective
precovers. The Ding projective resolutions are defined in a similar manner. \\Thus the fact that every module $M$ over a ring $R$ has a
Gorenstein projective (Ding projective) resolution is equivalent to the class of
Gorenstein projective modules (Ding projective modules) being a precovering class over $R$.\\

\section{results}

We start by showing that both the problem of the existence of special Gorenstein projective precovers and that of the existence of special Ding projective precovers can be
reduced to their existence for modules in the class $\mathcal{C}$. Both results follow from the following:\\

\begin{theorem}
Let $(\mathcal{C}, \mathcal{E})$ be a complete hereditary cotorsion pair in $R-Mod$, and let $\mathcal{D}$ be a subclass of $\mathcal{C}$. Then $\mathcal{D}$ is special precovering if and only if every module in $\mathcal{C}$ has a special $\mathcal{D}$-precover.
\end{theorem}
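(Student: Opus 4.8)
The ``only if'' implication is immediate, since $\mathcal{C}$ is a subclass of $R$-Mod: if $\mathcal{D}$ is special precovering, then in particular every module in $\mathcal{C}$ admits a special $\mathcal{D}$-precover. The content is the converse, and my plan is a two-step approximation argument whose engine is the completeness of $(\mathcal{C},\mathcal{E})$.

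So assume every module in $\mathcal{C}$ has a special $\mathcal{D}$-precover, and let $M$ be an arbitrary $R$-module. First I would use completeness of the cotorsion pair to obtain a special $\mathcal{C}$-precover of $M$, i.e.\ a short exact sequence
\[ 0 \longrightarrow E \longrightarrow C \stackrel{q}{\longrightarrow} M \longrightarrow 0 \]
with $C\in\mathcal{C}$ and $E\in\mathcal{E}=\mathcal{C}^{\bot}$. Since $C\in\mathcal{C}$, the hypothesis gives a special $\mathcal{D}$-precover of $C$, i.e.\ a short exact sequence
\[ 0 \longrightarrow E' \longrightarrow D \stackrel{p}{\longrightarrow} C \longrightarrow 0 \]
with $D\in\mathcal{D}$ and $Ext^1(D',E')=0$ for every $D'\in\mathcal{D}$. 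I then claim that the composite $g=q\circ p\colon D\to M$ is a special $\mathcal{D}$-precover of $M$.

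To verify that $g$ is a $\mathcal{D}$-precover, take any $h\colon D'\to M$ with $D'\in\mathcal{D}$; since $D'\in\mathcal{D}\subseteq\mathcal{C}$ and $q$ is a $\mathcal{C}$-precover, $h$ factors through $q$, and the resulting map $D'\to C$ then factors through $p$ because $p$ is a $\mathcal{D}$-precover of $C$, so $h$ factors through $g$. As $p$ and $q$ are epimorphisms, so is $g$, and $\Ker g=p^{-1}(E)$; restricting $p$ to this submodule produces a short exact sequence
\[ 0 \longrightarrow E' \longrightarrow \Ker g \longrightarrow E \longrightarrow 0 . \]
Applying $\Hom(D',-)$ for $D'\in\mathcal{D}$ gives the exact sequence $Ext^1(D',E')\to Ext^1(D',\Ker g)\to Ext^1(D',E)$, whose outer terms vanish --- the left one by the choice of the $\mathcal{D}$-precover of $C$, the right one because $E\in\mathcal{C}^{\bot}$ and $D'\in\mathcal{C}$ --- so $Ext^1(D',\Ker g)=0$, and $g$ is special.

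The one step that needs a little care is the identification of $\Ker g$ as an extension of $E$ by $E'$, together with the observation that, since a special precover only requires vanishing of $Ext^1$ of the class $\mathcal{D}$ against the kernel, first-$Ext$ vanishing on the two outer terms is exactly what is needed. I do not expect the ``hereditary'' hypothesis to be essential for this implication: the only orthogonality used is $Ext^1(\mathcal{D},\mathcal{E})=0$, which follows from $\mathcal{D}\subseteq\mathcal{C}={}^{\bot}\mathcal{E}$. It is nonetheless natural to keep it in the hypotheses, because the cotorsion pair $(\mathcal{C},\mathcal{SFPI})$ to which the theorem will be applied --- with $\mathcal{D}=\mathcal{GP}$ or $\mathcal{D}=\mathcal{DP}$ sitting inside $\mathcal{C}$ --- is hereditary.
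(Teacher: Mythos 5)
Your proof is correct and is essentially the paper's own argument: the paper also composes a special $\mathcal{C}$-approximation $0\to S\to C\to M\to 0$ with a special $\mathcal{D}$-precover $0\to X\to D\to C\to 0$ (packaged there as a pullback diagram) and concludes that the kernel, being an extension of $S\in\mathcal{C}^{\bot}\subseteq\mathcal{D}^{\bot}$ by $X\in\mathcal{D}^{\bot}$, lies in $\mathcal{D}^{\bot}$. Your extra factorization check of the precover property is harmless but not needed, since $Ext^1(D',\Ker g)=0$ already gives it, and your observation that heredity is not used here matches the paper's proof.
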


 \begin{proof}
 One implication is immediate: if $\mathcal{D}$ is special precovering then every module in $\mathcal{C}$ has a special $\mathcal{D}$-precover.\\
 For the converse: let $M$ be any $R$-module. Since $(\mathcal{C}, \mathcal{E})$ is a complete cotorsion pair, there is a short exact sequence
 $0 \rightarrow S \rightarrow C \rightarrow M \rightarrow 0$ with $C \in \mathcal{C}$ and with $S \in \mathcal{E}$.\\
 By hypothesis, there is an exact sequence $0 \rightarrow X \rightarrow D \rightarrow C  \rightarrow 0$ with $D \in \mathcal{D}$ and with $X \in \mathcal{D}^\bot$. \\

 Form the pull back diagram:\\

\[
\begin{diagram}
\node{}\node{0}\arrow{s}\node{0}\arrow{s}\\
\node{}\node{X}\arrow{s}\arrow{e,=}\node{X}\arrow{s}\\
\node{0}\arrow{e}\node{V}\arrow{s}\arrow{e}\node{D}\arrow{s}\arrow{e}\node{M}\arrow{s,=}\arrow{e}\node{0}\\
\node{0}\arrow{e}\node{S}\arrow{e}\node{C}\arrow{e}\node{M}\arrow{e}\node{0}
\end{diagram}
\]

 The exact sequence $0 \rightarrow X \rightarrow V \rightarrow S \rightarrow 0$ with $X \in \mathcal{D}^\bot$, and with $S \in \mathcal{C}^\bot \subseteq \mathcal{D}^\bot$
 (because $\mathcal{D} \subseteq \mathcal{C}$) gives that $V \in \mathcal{D}^\bot$.\\

 The exact sequence $0 \rightarrow V \rightarrow D \rightarrow M \rightarrow 0$ with $D \in \mathcal{D}$ and with $V \in \mathcal{D}^\bot$, shows that $D
 \rightarrow M$ is a special $\mathcal{D}$-precover.
\end{proof}

 Since $(\mathcal{C}, \mathcal{SFPI})$ is a complete hereditary cotorsion pair, and $\mathcal{DP} \subseteq \mathcal{C}$, we obtain:\\

  \begin{corollary}
 The class of Ding projective modules is special precovering if and only if every module in $\mathcal{C}$ has a special Ding projective precover.
 \end{corollary}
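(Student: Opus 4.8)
The plan is to deduce this immediately from Theorem 1, which was stated and proved precisely for this kind of application. The only work is to check that the hypotheses of Theorem 1 are met in the present situation.

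First I would set $(\mathcal{C},\mathcal{E}) = (\mathcal{C},\mathcal{SFPI})$. By Proposition 1, this is a complete hereditary cotorsion pair in $R-Mod$, so the first hypothesis of Theorem 1 holds. Next I would take $\mathcal{D} = \mathcal{DP}$, the class of Ding projective modules. By Remark 2 we have $\mathcal{DP}\subseteq\mathcal{GP}\subseteq\mathcal{C}$, so $\mathcal{D}$ is a subclass of $\mathcal{C}$, which is the second hypothesis. Theorem 1 then yields that $\mathcal{DP}$ is special precovering if and only if every module in $\mathcal{C}$ has a special $\mathcal{DP}$-precover, which is exactly the assertion of the corollary.

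There is no real obstacle here: the corollary is a direct instance of Theorem 1, and all the nontrivial content (completeness and heredity of the cotorsion pair, and the containment of the Ding projectives in $\mathcal{C}$, which rests on Proposition 2) has already been established. The one point worth a sentence is to recall why $\mathcal{DP}\subseteq\mathcal{C}$: a Ding projective module is a cycle of an exact complex of projectives, such a complex lies in $\widetilde{\mathcal{C}}$ by Proposition 2, and hence all of its cycles belong to $\mathcal{C}$.
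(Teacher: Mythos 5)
Your proposal is correct and follows exactly the paper's route: the corollary is obtained by applying Theorem 1 to the complete hereditary cotorsion pair $(\mathcal{C},\mathcal{SFPI})$ with $\mathcal{D}=\mathcal{DP}$, the inclusion $\mathcal{DP}\subseteq\mathcal{C}$ coming from Proposition 2. Nothing is missing.
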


 Another application of Theorem 1 (for $\mathcal{GP} \subseteq \mathcal{C}$) gives:\\

 \begin{corollary}
 The class of Gorenstein projective modules is special precovering if and only if every module in $\mathcal{C}$ has a special Gorenstein projective precover.
 \end{corollary}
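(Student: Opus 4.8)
The plan is to derive this corollary as an immediate instance of Theorem 1. The two facts needed are already in place: Proposition 1 says that $(\mathcal{C}, \mathcal{SFPI})$ is a complete hereditary cotorsion pair, and Remark 2 records the inclusion $\mathcal{GP} \subseteq \mathcal{C}$. So I apply Theorem 1 with $\mathcal{E} = \mathcal{SFPI}$ and with the subclass $\mathcal{D} = \mathcal{GP}$; its conclusion is precisely that $\mathcal{GP}$ is special precovering if and only if every module in $\mathcal{C}$ has a special $\mathcal{GP}$-precover, which is exactly the assertion of the corollary. Nothing further is required beyond checking that the hypotheses of Theorem 1 are met for this pair and this subclass, and they are, by Proposition 1 and Remark 2 respectively.

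If one prefers a self-contained argument, one can simply replay the pullback construction from the proof of Theorem 1 with $\mathcal{D} = \mathcal{GP}$: starting from an arbitrary module $M$, the completeness of $(\mathcal{C},\mathcal{SFPI})$ produces a short exact sequence $0 \to S \to C \to M \to 0$ with $C \in \mathcal{C}$ and $S \in \mathcal{SFPI}$; the hypothesis supplies a special $\mathcal{GP}$-precover, i.e.\ a short exact sequence $0 \to X \to D \to C \to 0$ with $D \in \mathcal{GP}$ and $X \in \mathcal{GP}^{\bot}$; forming the pullback of $D \to C$ along $S \hookrightarrow C$ then yields $0 \to V \to D \to M \to 0$, and one verifies $V \in \mathcal{GP}^{\bot}$ using $X \in \mathcal{GP}^{\bot}$ together with $S \in \mathcal{SFPI} = \mathcal{C}^{\bot} \subseteq \mathcal{GP}^{\bot}$ (the last inclusion because $\mathcal{GP} \subseteq \mathcal{C}$), so that $D \to M$ is the desired special $\mathcal{GP}$-precover.

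I do not expect any genuine obstacle here: the whole mathematical content sits in Theorem 1 (and, further back, in the result of \cite{ET} invoked in the proof of Proposition 1, which is what makes $(\mathcal{C}, \mathcal{SFPI})$ complete and hereditary). The corollary is a bookkeeping specialization, and the only point requiring a moment's care is matching the abstract hypotheses of Theorem 1 to the concrete cotorsion pair $(\mathcal{C}, \mathcal{SFPI})$ and the concrete subclass $\mathcal{GP}$.
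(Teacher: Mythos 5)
Your proposal is correct and matches the paper's argument exactly: the corollary is obtained by applying Theorem 1 to the complete hereditary cotorsion pair $(\mathcal{C},\mathcal{SFPI})$ with $\mathcal{D}=\mathcal{GP}\subseteq\mathcal{C}$. The optional self-contained pullback argument you sketch is just an unpacking of the proof of Theorem 1 and is likewise fine.
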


 It is known (\cite{Hol}, Theorem 2.10) that every module of finite Gorenstein projective dimension has a special Gorenstein projective precover. So we obtain:\\
 \begin{theorem}\label{GPspecial precovering:reduction}
 If every module from the class $\mathcal{C}$ has finite Gorenstein projective dimension, then the class of Gorenstein projective modules is special precovering in $R-Mod$.
 \end{theorem}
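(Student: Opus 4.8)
The plan is to read this off immediately from Corollary 2 together with the cited theorem of Holm. Let $M$ be an arbitrary module in the class $\mathcal{C}$. By hypothesis $M$ has finite Gorenstein projective dimension, so by \cite{Hol}, Theorem 2.10, $M$ admits a special Gorenstein projective precover. Hence every module in $\mathcal{C}$ has a special Gorenstein projective precover, and Corollary 2 then yields at once that the class $\mathcal{GP}$ of Gorenstein projective modules is special precovering in $R-Mod$.

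No new homological construction is required, because all the substantive work has already been carried out: Theorem 1, applied to the complete hereditary cotorsion pair $(\mathcal{C}, \mathcal{SFPI})$ and to the subclass $\mathcal{GP} \subseteq \mathcal{C}$, reduces the existence of special Gorenstein projective precovers for \emph{all} modules to their existence for modules in $\mathcal{C}$; this reduction is exactly the content of Corollary 2, whose proof is the pullback argument given above. So the present theorem is a two-line deduction: "finite Gpd on $\mathcal{C}$" $\Rightarrow$ "special $\mathcal{GP}$-precovers on $\mathcal{C}$" (by \cite{Hol}) $\Rightarrow$ "$\mathcal{GP}$ special precovering" (by Corollary 2).

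There is therefore essentially no obstacle here; the only external input is \cite{Hol}, Theorem 2.10, used verbatim. For completeness one may recall the idea behind that result: a module $M$ with finite Gorenstein projective dimension fits into a short exact sequence $0 \rightarrow K \rightarrow G \rightarrow M \rightarrow 0$ with $G$ Gorenstein projective and $K$ of finite projective dimension; since $Ext^1(G', K) = 0$ for every Gorenstein projective module $G'$ (Gorenstein projectives being $Ext$-orthogonal in positive degrees to modules of finite projective dimension), the epimorphism $G \rightarrow M$ is a special Gorenstein projective precover. But invoking \cite{Hol} directly is cleaner than reproducing this.
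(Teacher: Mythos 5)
Your proposal is correct and is exactly the paper's argument: the theorem is stated as an immediate consequence of Holm's Theorem 2.10 (every module of finite Gorenstein projective dimension has a special Gorenstein projective precover) combined with Corollary 2, which was obtained from Theorem 1 applied to the complete hereditary cotorsion pair $(\mathcal{C}, \mathcal{SFPI})$ and the subclass $\mathcal{GP} \subseteq \mathcal{C}$. Nothing in your write-up deviates from the paper's route.
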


 We prove that a sufficient condition for $\mathcal{GP}$ being a special precovering class is
 having an upper bound $n$ for $Gpd$ $C$ for every finitely presented module $C$.\\
  We will use the following results:\\

 \begin{proposition}\label{prop:Gpdcloseddirectsummand}
 Let $R$ be any ring. The class of modules of finite Gorenstein projective dimension is closed under direct summands.
 \end{proposition}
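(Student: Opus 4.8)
The statement to prove is that over any ring $R$, the class of modules of finite Gorenstein projective dimension is closed under direct summands. The plan is to reduce to the well-known corresponding fact for the (ordinary) projective dimension by means of a ``change of ring'' or rather a ``horseshoe-type'' argument, but the cleanest route is the following. Suppose $M = A \oplus B$ has $\operatorname{Gpd} M = n < \infty$. I want to show $\operatorname{Gpd} A \le n$ (and symmetrically for $B$). The key trick is to consider the module $N = A \oplus B \oplus A \oplus B \oplus \cdots$ arranged as an ``eventually repeating'' Eilenberg-swindle module, or more simply to work with $A$ directly: take a partial Gorenstein projective resolution of $A$ of length $n$, say $0 \to K_n \to G_{n-1} \to \cdots \to G_0 \to A \to 0$ with each $G_i$ Gorenstein projective, and show that $K_n$ is again Gorenstein projective.

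First I would recall the standard characterization (Holm): $\operatorname{Gpd} M \le n$ if and only if for every (equivalently, some) exact sequence $0 \to K_n \to G_{n-1} \to \cdots \to G_0 \to M \to 0$ with the $G_i$ Gorenstein projective, the module $K_n$ is Gorenstein projective. Also recall that $\operatorname{Gpd}(A \oplus B) = \max\{\operatorname{Gpd} A, \operatorname{Gpd} B\}$ is one direction of what we want, but the nontrivial inequality $\operatorname{Gpd} A \le \operatorname{Gpd}(A\oplus B)$ is exactly the summand claim. So: pick Gorenstein projective partial resolutions $0 \to K_n \to G_{n-1} \to \cdots \to G_0 \to A \to 0$ and $0 \to L_n \to H_{n-1} \to \cdots \to H_0 \to B \to 0$. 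Taking direct sums gives a partial Gorenstein projective resolution of $A \oplus B$ of length $n$ whose $n$-th syzygy is $K_n \oplus L_n$; since $\operatorname{Gpd}(A\oplus B) \le n$, this direct sum $K_n \oplus L_n$ is Gorenstein projective. Hence it suffices to prove: \emph{a direct summand of a Gorenstein projective module is Gorenstein projective} — and that is a classical result of Holm (\cite{Hol}, and originally implicit in Enochs–Jenda), valid over any ring, because $\mathcal{GP}$ is a resolving class closed under direct summands (it is the class of cycles of totally acyclic complexes, and one checks directly that a summand of a totally acyclic complex, after completing with an Eilenberg swindle, remains totally acyclic).

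The main obstacle is making the last step rigorous without circularity: one must know that $\mathcal{GP}$ itself is closed under direct summands. The cleanest self-contained argument: if $G$ is Gorenstein projective with totally acyclic complex $P$ and $G = G' \oplus G''$, form $P \oplus P^{[1]} \oplus P \oplus \cdots$ suitably shifted so that the direct-sum complex is still a totally acyclic complex of projectives with a cycle isomorphic to $G' \oplus G'' \oplus G' \oplus \cdots$; then an Eilenberg swindle / telescope argument produces a totally acyclic complex with cycle $G'$. Alternatively, and more economically, invoke that $(\mathcal{GP}, \mathcal{GP}^\perp)$-type considerations or the fact from Holm's paper that $\mathcal{GP}$ is closed under direct summands over an arbitrary ring (\cite{Hol}, Theorem 2.5). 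Once summand-closure of $\mathcal{GP}$ is in hand, the reduction above finishes the proof: $K_n \oplus L_n \in \mathcal{GP}$ forces $K_n \in \mathcal{GP}$, whence $\operatorname{Gpd} A \le n$, so the class of modules of finite Gorenstein projective dimension is closed under direct summands. I would present it in exactly this order: (1) recall the syzygy characterization of $\operatorname{Gpd} \le n$; (2) recall/cite that $\mathcal{GP}$ is closed under direct summands; (3) do the direct-sum-of-resolutions reduction; (4) conclude.
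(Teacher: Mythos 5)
Your proposal is correct and follows essentially the same route as the paper: take (partial) resolutions of the two summands, sum them to get a length-$n$ resolution of the whole module whose $n$-th syzygy is the direct sum of the two syzygies, conclude that this sum is Gorenstein projective from $\operatorname{Gpd}(A\oplus B)\le n$, and then invoke Holm's result (\cite{Hol}, Theorem 2.5) that $\mathcal{GP}$ is closed under direct summands. The only cosmetic difference is that the paper uses ordinary projective resolutions of the summands where you allow Gorenstein projective ones; the digression about reproving summand-closure of $\mathcal{GP}$ via an Eilenberg swindle is unnecessary since you, like the paper, ultimately cite Holm.
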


\begin{proof}
Let $G$ be a module with $Gpd$ $G = n \ge 0$ and $G = M \oplus M'$. Let $$0\to K_n\to P_{n-1}\to \ldots \to P_0\to M\to 0 $$ and $$0\to K'_n\to P'_{n-1}\to \ldots \to P'_0\to M'\to 0 $$ be exact sequences, where $P_0,\ldots, P_{n-1}$ and $P_0',\ldots, P_{n-1}'$ are projective modules. Since $Gpd$ $G = n$, we get that the module $K_n\oplus K'_n$ is Gorenstein projective, but then by Holm \cite{Hol}, Theorem 2.5, the modules $K_n$ and $K_n'$ are Gorenstein projective. Therefore we get that  $Gpd$ $M \leq n$ and  $Gpd$ $M' \leq n$.
\end{proof}

\begin{theorem}\label{thm:Gpprecovering}
  If there exists a nonnegative integer $n$ such that every finitely presented module
 has Gorenstein projective dimension $\le n$, then the class of Gorenstein projective modules is special precovering.
\end{theorem}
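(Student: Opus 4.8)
The plan is to deduce this from Theorem~\ref{GPspecial precovering:reduction}: since that result says $\mathcal{GP}$ is special precovering as soon as every module in $\mathcal{C}$ has finite Gorenstein projective dimension, it suffices to prove $Gpd\,C\le n$ for every $C\in\mathcal{C}$. Recall from the proof of Proposition~1 that $(\mathcal{C},\mathcal{SFPI})$ is the cotorsion pair generated by the set $\mathcal{S}$ made up of a representative set of the finitely presented modules together with all of their syzygies, so that $\mathcal{SFPI}=\mathcal{S}^{\bot}$ and $\mathcal{C}={}^{\bot}(\mathcal{S}^{\bot})$ (one uses $Ext^{i}(T,M)\cong Ext^{1}(\Omega^{i-1}T,M)$ for $i\ge1$). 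The first observation is that every $S\in\mathcal{S}$ satisfies $Gpd\,S\le n$: for a finitely presented module this is the hypothesis, while for a $k$-th syzygy $\Omega^{k}T$ it follows by dimension shifting, $Gpd\,\Omega^{k}T\le\max\{Gpd\,T-k,0\}\le n$ (this is independent of the resolution chosen, since two $k$-th syzygies differ by a projective, hence Gorenstein projective, direct summand).

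Next I would use the Eklof--Trlifaj theory of set-generated cotorsion pairs (\cite{ET}): since $(\mathcal{C},\mathcal{SFPI})$ is complete and generated by $\mathcal{S}$, the class $\mathcal{C}={}^{\bot}(\mathcal{S}^{\bot})$ consists of direct summands of $\mathcal{S}$-filtered modules (modules admitting a continuous transfinite filtration with consecutive quotients in $\mathcal{S}$); concretely, a special $\mathcal{C}$-precover $0\to K\to L\to C\to0$ of $C$ has $K\in\mathcal{S}^{\bot}=\mathcal{SFPI}$ and $L$ an $\mathcal{S}$-filtered module, and when $C\in\mathcal{C}$ we have $Ext^{1}(C,K)=0$, so this sequence splits and $C$ is a direct summand of $L$. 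By the first paragraph, $L$ is then a transfinite extension of modules of Gorenstein projective dimension $\le n$. Using the (known) fact that, over any ring, the class $\{X:Gpd\,X\le n\}$ is closed under transfinite extensions, we get $Gpd\,L\le n$, and hence $Gpd\,C\le n$ by Proposition~\ref{prop:Gpdcloseddirectsummand} (whose proof preserves the bound on direct summands). This shows $\mathcal{C}\subseteq\{X:Gpd\,X\le n\}$, and Theorem~\ref{GPspecial precovering:reduction} then gives the conclusion.

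The step I expect to carry the real weight is the closure of $\{X:Gpd\,X\le n\}$ under transfinite extensions. I would reduce this to the case $n=0$, i.e.\ to the closure of $\mathcal{GP}$ itself under transfinite extensions: given an $\mathcal{S}$-filtered $L$ with filtration $(L_{\alpha})_{\alpha\le\lambda}$, a transfinite horseshoe lemma produces a projective resolution $P_{\bullet}\to L$ filtered by subcomplexes $P^{(\alpha)}_{\bullet}$ whose successive quotient complexes are projective resolutions of the $L_{\alpha+1}/L_{\alpha}$; these quotients are complexes of projectives, so the short exact sequences of complexes $0\to P^{(\alpha)}_{\bullet}\to P^{(\alpha+1)}_{\bullet}\to P^{(\alpha+1)}_{\bullet}/P^{(\alpha)}_{\bullet}\to0$ split in each degree, and passing to $n$-th syzygies realizes $\Omega^{n}L$ as a transfinite extension of the modules $\Omega^{n}(L_{\alpha+1}/L_{\alpha})$, each of which is Gorenstein projective because $Gpd(L_{\alpha+1}/L_{\alpha})\le n$. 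Once $\mathcal{GP}$ is known to be closed under transfinite extensions, this forces $\Omega^{n}L\in\mathcal{GP}$, i.e.\ $Gpd\,L\le n$. The only point here that is not formal is the limit-ordinal step of that closure statement for $\mathcal{GP}$: it does not follow from closure of $\mathcal{GP}$ under ordinary extensions (Holm \cite{Hol}), nor from any direct-limit closure (which $\mathcal{GP}$ does not have in general), so this is where one must appeal to the known result in the literature rather than argue by hand.
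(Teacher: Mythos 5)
Your proposal is correct and follows essentially the same route as the paper: you bound $Gpd$ on the generating set $\mathcal{S}$ of syzygies of finitely presented modules, use Eklof--Trlifaj to write modules of $\mathcal{C}$ as direct summands of $\mathcal{S}$-filtered modules, invoke closure of $\{X:Gpd\,X\le n\}$ under transfinite extensions (the paper's citation of \cite{EIJ}, Theorem 3.4, which is exactly the ``known result'' you defer to), and finish with Proposition~\ref{prop:Gpdcloseddirectsummand} and Theorem~\ref{GPspecial precovering:reduction}. The only difference is cosmetic: you sketch a horseshoe-type reduction of the transfinite-extension step to the case $n=0$, whereas the paper simply cites the literature for the full statement.
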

\begin{proof} First we recall that $M\in\mathcal{SFPI}\iff Ext^i(T,M)=0$ for each finitely presented module $T$ and every $i\geq 1$. Now, $0=Ext^i(T,M)\simeq
Ext^1(\Omega^{i-1}(T),M)$, where $\Omega^{i-1}(T)$ is the $(i-1)$th syzygy module of $T$ (where $\Omega^{0}(T)=T$). Therefore, the cotorsion pair
$(\mathcal{C}, \mathcal{SFPI})$ is generated by the set $\mathcal S$ of representatives of modules $\Omega^j(T)$, with $T$  finitely presented and $j\geq 0$. Now, by
hypothesis, $Gpd\ T \le n$ for each finitely presented module $T$. This immediately yields, in particular, (see for example \cite{Hol}, Proposition 2.18) that $Gpd$
$\Omega^j(T)\le n$ for each $j\geq 0$, i.e. $Gpd$ $S\le n$, for each $S\in \mathcal S$. Now, by \cite{ET}, we have that $\mathcal{C}=\textrm{add}(\textrm{Filt}(\mathcal S))$,
i.e. every module in $\mathcal C$ is a direct summand of an $\mathcal S$-filtered module. Now, by \cite{EIJ}, Theorem 3.4, every $\mathcal S$-filtered module has Gorenstein
projective dimension $\leq n$. Finally, by Proposition \ref{prop:Gpdcloseddirectsummand}, we get that every module in $\mathcal C$ has Gorenstein projective dimension $\leq n$. Therefore, by Theorem \ref{GPspecial precovering:reduction},
the class $\mathcal{GP}$ is special precovering.
\end{proof}

\begin{remark}
It was already proved by Enochs and Jenda (\cite{enochs:00:relative}, Theorem 12.3.1), that if $R$ is noetherian on both sides and each finitely generated module (left or
right) has Gorenstein projective dimension $\le n$ then the class of Gorenstein projective modules is special precovering (because $R$ is $n$-Gorenstein in this case). From
this point of view, the previous statement might be seen as a one-sided analogue of the previous statement for arbitrary rings.
\end{remark}
A similar argument gives:\\
\begin{theorem}
 If there exists a nonnegative integer $n$ such that every finitely presented module
 has Ding projective dimension $\le n$, then the class of Ding projective modules is special precovering.
\end{theorem}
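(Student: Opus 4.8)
The plan is to mimic the proof of Theorem \ref{thm:Gpprecovering} almost verbatim, replacing Gorenstein projective dimension by Ding projective dimension throughout. The key structural fact is that the cotorsion pair $(\mathcal{C},\mathcal{SFPI})$ is generated by the set $\mathcal{S}$ of representatives of the syzygies $\Omega^j(T)$ with $T$ finitely presented and $j\ge 0$; this is established in the earlier proof and does not depend on which Gorenstein-type dimension we are using. So the first step is to invoke $\mathcal{C}=\mathrm{add}(\mathrm{Filt}(\mathcal{S}))$ from \cite{ET}, exactly as before.

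The second step is to transfer the hypothesis $Dpd\ T\le n$ (Ding projective dimension) along syzygies: if $Dpd\ T\le n$ for every finitely presented $T$, then $Dpd\ \Omega^j(T)\le n$ for every $j\ge 0$, so $Dpd\ S\le n$ for each $S\in\mathcal{S}$. This is the Ding-projective analogue of \cite{Hol}, Proposition 2.18; the relevant syzygy-shifting behaviour of Ding projective dimension is standard (for instance it follows from the fact that $\mathcal{DP}$ is a projectively resolving class that is closed under arbitrary direct sums, so its associated dimension behaves like the Gorenstein projective one). Next, one needs the Ding-projective analogue of \cite{EIJ}, Theorem 3.4, namely that an $\mathcal{S}$-filtered module has Ding projective dimension $\le n$ whenever every module of $\mathcal{S}$ does; then, combined with the Ding-projective analogue of Proposition \ref{prop:Gpdcloseddirectsummand} (closure of "finite Ding projective dimension" under direct summands, which in turn rests on the fact that $\mathcal{DP}$ is closed under direct summands), one concludes that every module in $\mathcal{C}$ has finite — indeed $\le n$ — Ding projective dimension.

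The final step is to conclude, via the Ding-projective analogue of Theorem \ref{GPspecial precovering:reduction}. For this we need: (a) Corollary 1 above already reduces the problem to showing every module in $\mathcal{C}$ has a special $\mathcal{DP}$-precover; and (b) every module of finite Ding projective dimension has a special Ding projective precover. Part (b) is the Ding analogue of \cite{Hol}, Theorem 2.10, and follows because $(\mathcal{DP},\mathcal{DP}^\bot)$ is known to be a complete hereditary cotorsion pair (the Ding projective modules form the left side of a complete cotorsion pair over any ring; see e.g. \cite{gillespie-ding-modules} or \cite{iacob:2020}) so that a standard dimension-shifting/induction argument on the length of a Ding projective resolution produces special precovers for modules of finite Ding projective dimension. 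Putting these together gives that $\mathcal{DP}$ is special precovering.

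The main obstacle I anticipate is verifying that the auxiliary homological facts used freely in the Gorenstein-projective case genuinely carry over to the Ding-projective case — specifically the three ingredients: syzygy invariance of $Dpd$ up to the bound $n$, the filtration statement ("$\mathcal{S}$-filtered implies $Dpd\le n$", i.e. an Eklof-type lemma for Ding projective dimension), and special $\mathcal{DP}$-precovers for modules of finite $Dpd$. None of these is hard, since $\mathcal{DP}$ shares the key closure properties of $\mathcal{GP}$ (projectively resolving, closed under direct sums and summands) and sits inside a complete hereditary cotorsion pair; but each needs a citation or a short argument, and one must be slightly careful that the relevant results in \cite{EIJ} and \cite{Hol} are stated (or extend) for the class $\mathcal{DP}$ and not only for $\mathcal{GP}$. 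Assuming these, the proof is a routine translation of the one given for Theorem \ref{thm:Gpprecovering}.
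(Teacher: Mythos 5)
Your skeleton is exactly the one the paper intends (its own proof is just the remark that the argument of Theorem \ref{thm:Gpprecovering} carries over with $\mathcal{DP}$ in place of $\mathcal{GP}$): generation of $(\mathcal{C},\mathcal{SFPI})$ by the syzygies of finitely presented modules, syzygy invariance of the Ding projective dimension, the filtration/Eklof-type statement analogous to \cite{EIJ}, Theorem 3.4, closure of finite Ding projective dimension under direct summands as in Proposition \ref{prop:Gpdcloseddirectsummand}, and the reduction of Theorem \ref{GPspecial precovering:reduction} via Corollary 1. However, your justification of the last ingredient (your step (b)) contains a genuine flaw: you assert that $(\mathcal{DP},\mathcal{DP}^\perp)$ is known to be a \emph{complete} cotorsion pair over any ring, citing \cite{gillespie-ding-modules} and \cite{iacob:2020}. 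That is not known; those sources give completeness only over coherent rings, and over an arbitrary ring \cite{cortes.saroch} yields only that the pair is a hereditary cotorsion pair. Indeed, if completeness over arbitrary rings were known, the theorem would be vacuous (every module would already have a special $\mathcal{DP}$-precover, with no hypothesis on finitely presented modules), so as written this step is circular -- it assumes the open problem the theorem is addressing.

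The repair is straightforward and does not need completeness: the Ding analogue of \cite{Hol}, Theorem 2.10, is proved directly. If $M$ has finite Ding projective dimension $m$, one constructs a short exact sequence $0 \to K \to D \to M \to 0$ with $D$ Ding projective and $pd\,K \le m-1$, using only that $\mathcal{DP}$ is projectively resolving and closed under direct summands (and contains the projectives); since $Ext^1(D',K)=0$ for every Ding projective $D'$ and every module $K$ of finite projective (even finite flat) dimension, this map is a special $\mathcal{DP}$-precover. With that substitution for your step (b), the rest of your translation of the proof of Theorem \ref{thm:Gpprecovering} -- including your correct caveat that the syzygy-shifting and filtration lemmas must be checked for $\mathcal{DP}$, which works because $\mathcal{DP}$ shares the relevant closure properties of $\mathcal{GP}$ -- matches the paper's intended argument.
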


Since the two pairs $(\mathcal{GP},\mathcal{GP}^\perp)$ and $(\mathcal{DP},\mathcal{DP}^\perp)$ are always hereditary cotorsion pairs (Cort\'es-Izurdiaga and Saroch,
\cite{cortes.saroch}, Corollary 3.4) the previous two results immediately yield the following corollary.
\begin{corollary}
The following statements hold:
\begin{enumerate}
\item If there exists a nonnegative integer $n$ such that every finitely presented module
 has Gorenstein projective dimension $\le n$, then the cotorsion pair $(\mathcal{GP},\mathcal{GP}^\perp)$ is complete hereditary.
\item If there exists a nonnegative integer $n$ such that every finitely presented module
 has Ding projective dimension $\le n$, then the cotorsion pair $(\mathcal{DP},\mathcal{DP}^\perp)$ is complete hereditary.
 \end{enumerate}
\end{corollary}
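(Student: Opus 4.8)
The plan is to follow the proof of Theorem~\ref{thm:Gpprecovering} line by line, replacing ``Gorenstein projective'' by ``Ding projective'' throughout and checking that each ingredient used there has a Ding-projective counterpart. As in that proof, one records first that the complete hereditary cotorsion pair $(\mathcal{C},\mathcal{SFPI})$ is generated by the set $\mathcal{S}$ of representatives of the syzygy modules $\Omega^{j}(T)$, where $T$ ranges over a representative set of the finitely presented modules and $j\ge 0$: this uses only the isomorphisms $Ext^{i}(T,M)\simeq Ext^{1}(\Omega^{i-1}(T),M)$ and the identity $\mathcal{SFPI}=\mathcal{S}^{\perp}$, exactly as before.

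Next I would prove that every module in $\mathcal{C}$ has Ding projective dimension $\le n$. By hypothesis $Dpd\,T\le n$ for every finitely presented $T$, and since passing to a syzygy does not raise the Ding projective dimension (the Ding-projective analogue of \cite{Hol}, Proposition~2.18), it follows that $Dpd\,S\le n$ for every $S\in\mathcal{S}$. By \cite{ET} we have $\mathcal{C}=\mathrm{add}(\mathrm{Filt}(\mathcal{S}))$, so two closure statements remain to be verified: that an $\mathcal{S}$-filtered module still has Ding projective dimension $\le n$, and that this bound passes to direct summands. The first is the Ding-projective analogue of \cite{EIJ}, Theorem~3.4; it should go through by the same transfinite-induction argument, using that $(\mathcal{DP},\mathcal{DP}^{\perp})$ is a hereditary cotorsion pair (\cite{cortes.saroch}, Corollary~3.4), hence $\mathcal{DP}$ is closed under transfinite extensions and dimension shifting is available. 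The second is the Ding-projective analogue of Proposition~\ref{prop:Gpdcloseddirectsummand}, and is proved in the same way: given $M\oplus M'$ of Ding projective dimension $n$, partial projective resolutions of $M$ and $M'$ exhibit $n$-th syzygies $K_{n}$, $K'_{n}$ with $K_{n}\oplus K'_{n}\in\mathcal{DP}$, and since $\mathcal{DP}$, being the left-hand class of a cotorsion pair, is closed under direct summands, both $K_{n}$ and $K'_{n}$ are Ding projective, so $Dpd\,M\le n$ and $Dpd\,M'\le n$.

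With every module in $\mathcal{C}$ now known to have finite Ding projective dimension, I would finish by invoking the Ding-projective analogue of \cite{Hol}, Theorem~2.10, namely that a module of finite Ding projective dimension admits a special Ding projective precover. Hence every module in $\mathcal{C}$ has a special Ding projective precover, and therefore, by the reduction to the class $\mathcal{C}$ for Ding projective precovers (Corollary~1), the class $\mathcal{DP}$ is special precovering.

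I expect the only step requiring genuine care to be the filtration step --- the Ding-projective analogue of \cite{EIJ}, Theorem~3.4 --- since everything else (the syzygy bound, closure under direct summands, and ``finite Ding projective dimension implies special precover'') is a routine transcription of the corresponding Gorenstein-projective fact, and the assembly of these pieces is purely formal.
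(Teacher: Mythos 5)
Your proposal does not actually reach the statement being proved: everything you write is a re-derivation of the paper's Theorem 4 (the Ding analogue of Theorem~\ref{thm:Gpprecovering}), and it ends with ``$\mathcal{DP}$ is special precovering''. That theorem is already stated in the paper immediately before the corollary, and the corollary's content is the further deduction that $(\mathcal{GP},\mathcal{GP}^\perp)$ and $(\mathcal{DP},\mathcal{DP}^\perp)$ are \emph{complete hereditary cotorsion pairs}. You never make that deduction: part (1), the Gorenstein projective half, is not addressed at all, and in part (2) you stop at ``special precovering'' without explaining why this yields a complete cotorsion pair. Note that ``$\mathcal{DP}$ is a special precovering class'' by itself does not even say that $(\mathcal{DP},\mathcal{DP}^\perp)$ is a cotorsion pair --- one needs $\mathcal{DP}={}^\perp(\mathcal{DP}^\perp)$ --- and that, together with hereditariness, is exactly the input from \cite{cortes.saroch}, Corollary 3.4. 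The paper's own proof of the corollary is precisely this short assembly: by \cite{cortes.saroch} the two pairs are always hereditary cotorsion pairs; by Theorem~\ref{thm:Gpprecovering} and its Ding analogue every module $M$ sits in an exact sequence $0\to V\to D\to M\to 0$ with $D$ in the left class and $V$ in its right orthogonal, i.e. the pair has enough projectives, and hence is complete (the special preenvelope half follows by Salce's lemma). You do cite \cite{cortes.saroch}, Corollary 3.4, but only as a tool inside the filtration step, not to obtain the conclusion.

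The material you do present --- the line-by-line Ding transcription of Theorem~\ref{thm:Gpprecovering}, with the Ding analogues of \cite{Hol}, Proposition 2.18 and Theorem 2.10, of Proposition~\ref{prop:Gpdcloseddirectsummand}, and of \cite{EIJ}, Theorem 3.4, followed by the reduction to $\mathcal{C}$ --- is sound and matches what the paper intends by ``a similar argument gives''; your identification of the filtration step as the one needing genuine care is apt. But since both Theorem~\ref{thm:Gpprecovering} and its Ding analogue are available before the corollary, this work is redundant for the statement at hand, while the short argument the corollary actually requires (hereditary cotorsion pair from \cite{cortes.saroch} plus completeness from the existence of special precovers) is missing; supplying it, for both the Gorenstein and the Ding case, would complete your proof.
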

\begin{remark}
It is known that $(\mathcal{GP},\mathcal{GP}^\perp)$ is a complete hereditary cotorsion pair for rings $R$ such that $Ggldim(R)<\infty$ (see, for example, \cite{EEJR},
Theorem 2.26, and \cite{BM}, Theorem 1.1), where
$Ggldim(R) = sup \{Gpd M$, $M$ is a module $\}$. The previous corollary establishes the same statement for rings with
$sup \{Gpd M$, $M$ is finitely presented $\}<\infty$.
\end{remark}

We recall that the small finitistic dimension of a ring $R$ is defined to be $fpd(R) = sup \{p.d. M$, $M$ is finitely generated, with $p.d. M < \infty \}$. \\
We also recall that the Second Finitistic Dimension Conjecture states that $ fpd(R) < \infty$.\\
It is known that, if the ring $R$ is left noetherian, then the small finitistic dimension can be computed by replacing the class of finitely generated modules of finite
projective dimension with its Gorenstein counterpart - the class of finitely generated modules of finite Gorenstein projective dimension (see for example \cite{wang.li.hu},
Lemma 4.2, or \cite{moradifar.saroch}, page 4).

\begin{corollary}\label{finitistic}
Over a left noetherian ring $R$ such that every finitely generated module has finite Gorenstein projective dimension, the validity of the Second Finitistic Dimension Conjecture implies that the class of Gorenstein projective modules is special precovering.
\end{corollary}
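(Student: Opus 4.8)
The plan is to deduce this from Theorem \ref{thm:Gpprecovering}: by that theorem it suffices to exhibit a single nonnegative integer $n$ bounding the Gorenstein projective dimension of every finitely presented module. Since $R$ is left noetherian, the finitely presented modules coincide with the finitely generated ones, so the task reduces to finding $n$ with $Gpd\ M \le n$ for every finitely generated $M$.

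First I would invoke the fact recalled immediately before the statement: over a left noetherian ring the small finitistic dimension $fpd(R)$ can equally be computed as the ``Gorenstein'' small finitistic dimension $\sup\{Gpd\ M : M \text{ finitely generated, } Gpd\ M < \infty\}$ (this is \cite{wang.li.hu}, Lemma 4.2, or \cite{moradifar.saroch}). By the standing hypothesis every finitely generated module already has \emph{finite} Gorenstein projective dimension, so the restriction $Gpd\ M<\infty$ is vacuous and this supremum equals $\sup\{Gpd\ M : M \text{ finitely generated}\}$. Hence
\[
\sup\{Gpd\ M : M \text{ finitely generated}\} = fpd(R).
\]
Now the validity of the Second Finitistic Dimension Conjecture asserts precisely that $fpd(R) < \infty$. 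Setting $n := fpd(R)$ gives $Gpd\ M \le n$ for every finitely generated, hence every finitely presented, $R$-module.

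Finally, Theorem \ref{thm:Gpprecovering} applies directly with this $n$ and yields that the class $\mathcal{GP}$ of Gorenstein projective modules is special precovering. The only non-formal ingredient is the identification of $fpd(R)$ with its Gorenstein counterpart over a left noetherian ring, which is the cited lemma; the remaining steps are purely bookkeeping, so I do not anticipate a genuine obstacle here.
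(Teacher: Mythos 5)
Your proposal is correct and follows essentially the same route as the paper: use the cited identification of $fpd(R)$ with its Gorenstein counterpart over a left noetherian ring to turn the Second Finitistic Dimension Conjecture into a uniform bound $n$ on $Gpd$ of finitely generated (equivalently, finitely presented) modules, then apply Theorem \ref{thm:Gpprecovering}. The paper's proof is just a terser version of the same argument, leaving the finitely generated/finitely presented identification and the vacuity of the finiteness restriction implicit.
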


\begin{proof}
The validity of the Second Finitistic Dimension Conjecture means that there is a nonnegative integer $n$ such that every finitely generated module $M$ has Gorenstein
projective dimension $\le n$. By Theorem \ref{thm:Gpprecovering}, the class of Gorenstein projective modules is special precovering.
\end{proof}
\begin{remark}
By \cite{EIJ}, Proposition 3.5, and \cite{cortes}, Corollary 3.6, if $R$ is left noetherian and every finitely generated module has finite Gorenstein projective dimension $\leq n$, then $Ggldim(R)<\infty$, and so the class of Gorenstein projective modules is special precovering. Theorem \ref{thm:Gpprecovering} provides with more elementary proof of this fact. In addition, there are rings of infinite Gorenstein global dimension such that every finitely presented module has finite Gorenstein projective dimension $\leq n$; every von Neumann regular ring of infinite global dimension exemplifies this (see \cite{CET}, Remark 4.7, for a concrete example).
\end{remark}

\begin{center}{\bf Acknowledgements}
\end{center}
{\par \noindent
The authors wish to thank Manuel Cort\'es-Izurdiaga and Ioannis Emmanouil for useful and pertinent comments to an earlier version of this manuscript.}

\end{document}